
\documentclass{amsart}
\usepackage{latexsym,amsfonts,amssymb,amsmath,amsthm,amscd}



\newtheorem{theorem}{Theorem}[section]

\newtheorem{proposition}[theorem]{Proposition}
\newtheorem{corollary}[theorem]{Corollary}

\theoremstyle{definition}

\newtheorem{example}[theorem]{Example}

\theoremstyle{remark}
\newtheorem{remark}[theorem]{Remark}

\numberwithin{equation}{section}

\newcommand\JM{Mierczy\'nski}
\newcommand\RR{\ensuremath{\mathbb{R}}}

\newcommand\ZZ{\ensuremath{\mathbb{Z}}}

\newcommand\PP{\ensuremath{\mathbb{P}}}

\newcommand{\OFP}{\ensuremath{(\Omega,\mathfrak{F},\PP)}}

\newcommand{\norm}[1]{\ensuremath{\lVert#1\rVert}}

\DeclareMathOperator{\trace}{tr}
\DeclareMathOperator{\col}{col}
\DeclareMathOperator{\Id}{Id}

\begin{document}
\title[Lower estimates of top exponent for cooperative systems]
{Lower estimates of top Lyapunov exponent for cooperative random
systems of linear ODEs}

\author{Janusz Mierczy\'nski} \address{Institute of Mathematics and
Computer Science, Wroc{\l}aw University of Technology, Wybrze\.ze
Wyspia\'nskiego 27, PL-50-370 Wroc{\l}aw, Poland}

\curraddr{}

\email{mierczyn@pwr.edu.pl}

\thanks{First published in \emph{Proc. Amer. Math. Soc.} \textbf{143}(3), March 2015, pp. 1127--1135, published by the American Mathematical Society. \copyright\ 2016 American Mathematical Society.
\\
Supported by project S20058/I-18.}

\commby{}

\date{} \subjclass[2010]{Primary 34C12, 34D08, 37C65.  Secondary
15B48, 92D25}

\begin{abstract}
For cooperative random linear systems of ordinary differential
equations a method is presented of obtaining lower estimates of the
top Lyapunov exponent.  The proofs are based on applying some
polynomial Lyapunov\nobreakdash-\hspace{0pt}like function.  Known
estimates for the dominant eigenvalue of a nonnegative matrix due to
G. Frobenius and L. Yu. Kolotilina are shown to be specializations of
our results.
\end{abstract}

\bibliographystyle{amsplain}

\maketitle

In many dynamical systems (DSs) stemming from population dynamics
{\em persistence\/} is an important concept:  mathematically
speaking, some (usually invariant, or at~least forward invariant)
subset $Y$ of the phase space $X$ should have a property that
(forward semi)trajectories of points close to $Y$ have their
$\omega$\nobreakdash-\hspace{0pt}limit sets disjoint from $Y$, or
that (forward semi)trajectories of points close to $Y$ eventually
leave some preassigned neighborhood of $Y$ (that stronger property is
usually called {\em permanence\/}).

$Y$ corresponds to some constituents of the population being absent,
therefore it has a structure of a $C^1$ submanifold, or a (forward)
invariant family of $C^1$ submanifolds.  When the dynamical system
in~question is smooth enough, its linearization leaves the tangent
bundle of $Y$ invariant, consequently one can legitimately speak of
{\em normal Lyapunov exponents\/}, that is, exponential growth rates
of those trajectories of the linearization at $Y$ which correspond to
a subbundle complementary to the tangent bundle of $Y$. It is to be
expected that positivity of such normal Lyapunov exponents should
imply persistence.

Indeed, there are a wealth of papers where the above implication is
shown, for~instance \cite{Sch}, \cite{MiSch}, \cite{GaHof} for
autonomous systems of ordinary differential equations,
\cite{MiShZhao}, \cite{MiSh-JDDE} for nonautonomous systems of
reaction\nobreakdash--\hspace{0pt}diffusion equations of Kolmogorov
type, to mention but a few.

Even in the case when the original DS is generated by an autonomous
system of ordinary differential equations (ODEs), if there are
nontrivial ergodic invariant measures on $Y$ investigation of normal
Lyapunov exponents requires considering
time\nobreakdash-\hspace{0pt}dependent systems of linear ODEs.

It is straightforward then that being able to obtain estimates
\textbf{from below} of Lyapunov exponents for
time\nobreakdash-\hspace{0pt}dependent systems of linear ODEs is of
great importance in the theory of persistence.  Sometimes, especially
in systems of ODEs encountered in animal ecology, $Y$ is a
submanifold of codimension one (or the problem can be reduced to that
case), so that one needs only to find sufficient conditions for the
least Lyapunov exponent of a
(one\nobreakdash-\hspace{0pt}dimensional) linear ODE to be positive,
which is a relatively easy task.

In applications of systems of ODEs to epidemiology, the codimension
of $Y$ may be greater than one.  Also, an important property is that
the matrix of the restriction of the linearization to a complementary
subbundle of $Y$ has off\nobreakdash-\hspace{0pt}diagonal entries
nonnegative (see~\cite{Ma-Sm}, \cite{Sal} and references contained
therein).

\smallskip
Thanks to the efforts of many generations of mathematicians, numerous
results are known on estimates of the dominant eigenvalue of a matrix
with nonnegative off\nobreakdash-\hspace{0pt}diagonal entries, see
\cite[Chapter~II]{Minc}.

One should bear in~mind, however, that for
many\nobreakdash-\hspace{0pt}dimensional systems $x' = A(t)x$ of
linear ODEs there is no straightforward relation between the real
parts of the eigenvalues of $A(t)$ and exponential growth rates of
solutions of the system (see an excellent paper \cite{JoRo}). So,
results alluded to in the previous paragraph are of limited (if~any)
value as regards finding estimates in the
time\nobreakdash-\hspace{0pt}dependent case.

\medskip
Our goal in the present paper to give easily checkable lower
estimates on maximum exponential growth rates of solutions of random
linear ODE systems whose matrices have nonnegative
off\nobreakdash-\hspace{0pt}diagonal entries.

\setcounter{section}{-1}

\section{Preliminaries}

We denote by $\RR^{N \times N}$ the set of real $N$ by $N$ matrices,
equipped with the standard structures. The entries of $A \in \RR^{N
\times N}$ are denoted by $a_{ij}$, etc.

A matrix $A \in \RR^{N \times N}$ is an {\em
ML\nobreakdash-\hspace{0pt}matrix\/} if its
off\nobreakdash-\hspace{0pt}diagonal entries are nonnegative.  For an
ML\nobreakdash-\hspace{0pt}matrix $A$ its eigenvalue with the largest
real part is in~fact real (cf., e.g., \cite[Thm.~2.1.1 on p.\
26]{BerPlem}, or \cite[Section~2.3]{Sen}), will be called the {\em
dominant eigenvalue\/} of $A$, and denoted by $d(A)$.  If $A$ has all
entries nonnegative then $d(A)$ is equal to the spectral radius of
$A$.

\smallskip
$\langle \cdot, \cdot \rangle$ denotes the standard (Euclidean) inner
product in $\RR^N$.  $\norm{\cdot}$ will stand, depending on the
context, either for the standard Euclidean norm on $\RR^N$, or for
the operator norm on $\mathcal{L}(\RR^N)$, the vector space of linear
maps from $\RR^N$ into $\RR^N$, induced by the standard Euclidean
norm.

\smallskip
We denote
\begin{equation*}
\begin{aligned}
\RR^{N}_{+}& :=
\{\, x = \col{(x_1, \dots, x_n)} : x_i \ge 0 \ , 1 \le i \le n \,\}, \\
\RR^{N}_{++} & := \{\, x = \col{(x_1, \dots, x_n)} : x_i > 0 \ , 1
\le i \le n \,\}.
\end{aligned}
\end{equation*}

Our method consists in finding a sort of Lyapunov function, more
precisely, a homogeneous polynomial $V$ of the coordinates $x_1,
\dots, x_N$ of $x$, taking positive values on $\RR^{N}_{++}$, with
the property that
\begin{itemize}
\item
there exists $a > 0$ such that
\begin{equation*}
\norm{x} \ge a V(x) \quad \text{for all } x \in \RR^{N}_{++};
\end{equation*}
\item
there is a function $b(\cdot)$ defined on the set of
ML\nobreakdash-\hspace{0pt}matrices, expressed as an elementary
function of the entries (in our examples, by~means only of
arithmetic operations, taking square roots and/or maxima/minima),
such that for any ML\nobreakdash-\hspace{0pt}matrix $A$ the
inequality
\begin{equation*}
\langle \nabla V(x), Ax \rangle \ge b(A) \, V(x)
\end{equation*}
holds for all $x \in \RR^{N}_{++}$.
\end{itemize}

The paper is organized as follows.

In Section~\ref{section:Kolotilina} the Lyapunov function $V$ is
defined as $V(x) = x_1 \cdot \ldots \cdot x_{N}$. The estimates
obtained (Theorem~\ref{thm:ODEs}) are called {\em Kolotilina-type\/}
estimates, since their counterparts for nonnegative matrices first
appeared in L. Yu. Kolotilina's paper~\cite{Kolo}.

In Section~\ref{section:Frobenius} the polynomial $V(x) = x_1 +
\ldots + x_N$ is used. For time\nobreakdash-\hspace{0pt}independent
matrices the results obtained there
(Theorems~\ref{thm:ODEs-Frobenius-columns}
and~\ref{thm:ODEs-Frobenius-rows}) specialize to
well\nobreakdash-\hspace{0pt}known Frobenius estimates (see, e.g.,
\cite[3.1.1]{MarMinc}, or \cite[Chapter~II]{Minc}).

Section~\ref{section:extensions} gives possible extensions of the
results.  Section~\ref{section:concluding} contains concluding
remarks

\section{Kolotilina-type estimates}
\label{section:Kolotilina}

\subsection{Basic estimate}
\label{subsection:basic-estimate}
Throughout the present subsection we assume that $A \colon (\alpha,
\beta) \to \RR^{N \times N}$, where $-\infty \le \alpha < \beta \le
\infty$, with locally integrable entries.

Consider a system of linear ODEs
\begin{equation}
\label{ODE-system-nonautonomous}
x' = A(t) x, \quad t \in (\alpha, \beta).
\end{equation}
For any $t_0 \in (\alpha, \beta)$ and any $x_0 \in \RR^N$ there
exists a unique solution $[\, (\alpha, \beta) \ni t \mapsto x(t; t_0,
x_0) \in \RR^N \,]$ of \eqref{ODE-system-nonautonomous} satisfying
the initial condition $x(t_0) = x_0$. The solution is understood in
the Carath\'eodory sense: The function $[\, t \mapsto x(t; t_0, x_0)
\,]$ is absolutely continuous on any compact subinterval of $(\alpha,
\beta)$, $x(t_0; t_0, x_0) = x_0$, and
\eqref{ODE-system-nonautonomous} is satisfied for
Lebesgue\nobreakdash-\hspace{0pt}a.e.\ $t \in (\alpha, \beta)$.

System~\eqref{ODE-system-nonautonomous} is said to be {\em
cooperative\/} if for each $t \in (\alpha, \beta)$ the matrix $A(t)$
is an ML\nobreakdash-\hspace{0pt}matrix.

\begin{proposition}
\label{prop:basic-estimate}
Assume that \eqref{ODE-system-nonautonomous} is cooperative.  Then
for each $t_0 \in (\alpha, \beta)$ and each $x_0 \in \RR^{N}_{++}$,
if we denote $x(t; t_0, x_0) = \col(x_1(t), \dots, x_N(t)))$, the
inequality
\begin{equation}
\label{eq:basic-estimate}
\prod_{i} x_{i}(t) \ge \exp{\biggl( \int\limits_{t_0}^{t} \Bigl(
\trace{A(\tau)} + 2 \sum_{j < k} \sqrt{a_{jk}(\tau) \,
a_{kj}(\tau)}\: \Bigr) \, d\tau \biggr)} \prod_{i} x_{i}(t_0)
\end{equation}
holds for all $t \in (t_0, \beta)$.
\end{proposition}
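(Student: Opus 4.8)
The plan is to work with the logarithmic derivative of $V(x(t)) = \prod_i x_i(t)$ and to realise the inequality \eqref{eq:basic-estimate} as the content of the second bullet in the Introduction, namely that for every ML-matrix $A$ one has
\begin{equation*}
\langle \nabla V(x), Ax \rangle \ge b(A)\, V(x) \qquad \text{for all } x \in \RR^{N}_{++},
\end{equation*}
with $V(x) = x_1 \cdots x_N$ and $b(A) = \trace A + 2\sum_{j<k}\sqrt{a_{jk}a_{kj}}$. Once this pointwise differential inequality is established, the Proposition follows by a standard comparison/Gronwall argument: first note that $\RR^N_{++}$ is forward invariant under the cooperative system (this is the classical Kamke--Müller property, but here it suffices to observe that if some $x_i$ were the first coordinate to reach $0$ at time $t_1$, then at $t_1$ we have $x_i'(t_1) = \sum_j a_{ij}(t_1) x_j(t_1) = \sum_{j\ne i} a_{ij}(t_1) x_j(t_1) \ge 0$ since the off-diagonal entries are nonnegative and the other coordinates are still $>0$, contradicting that $x_i$ decreased to $0$; a little care is needed because of the Carathéodory setting, but the sign argument is robust under integration). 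Hence $V(x(t)) > 0$ on $(t_0,\beta)$ and $t \mapsto \ln V(x(t))$ is absolutely continuous on compact subintervals.

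**The computation** at the heart of the argument is the estimate of $\langle \nabla V(x), Ax\rangle / V(x)$. Since $\partial V/\partial x_i = V(x)/x_i$ (all $x_i > 0$), we get
\begin{equation*}
\frac{\langle \nabla V(x), Ax\rangle}{V(x)} = \sum_i \frac{(Ax)_i}{x_i} = \sum_i \sum_j a_{ij}\frac{x_j}{x_i} = \trace A + \sum_{i\ne j} a_{ij}\frac{x_j}{x_i}.
\end{equation*}
Now I pair up the terms $(i,j)$ and $(j,i)$ for $j<k$: for each unordered pair $\{j,k\}$ the contribution is $a_{jk}\dfrac{x_k}{x_j} + a_{kj}\dfrac{x_j}{x_k}$, and since $a_{jk}, a_{kj}\ge 0$ and $x_j, x_k > 0$, the AM--GM inequality gives
\begin{equation*}
a_{jk}\frac{x_k}{x_j} + a_{kj}\frac{x_j}{x_k} \ge 2\sqrt{a_{jk}\frac{x_k}{x_j}\cdot a_{kj}\frac{x_j}{x_k}} = 2\sqrt{a_{jk}a_{kj}}.
\end{equation*}
Summing over all pairs $j<k$ yields $\dfrac{\langle \nabla V(x), Ax\rangle}{V(x)} \ge \trace A + 2\sum_{j<k}\sqrt{a_{jk}a_{kj}} = b(A)$, which is exactly the pointwise inequality we wanted.

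**Putting it together:** for a.e.\ $t$ the chain rule gives
\begin{equation*}
\frac{d}{dt}\ln V(x(t)) = \sum_i \frac{x_i'(t)}{x_i(t)} = \frac{\langle \nabla V(x(t)), A(t)x(t)\rangle}{V(x(t))} \ge b(A(t)) = \trace A(t) + 2\sum_{j<k}\sqrt{a_{jk}(t)a_{kj}(t)}.
\end{equation*}
Integrating from $t_0$ to $t$ (legitimate by absolute continuity, and the right-hand side is locally integrable because the entries of $A(\cdot)$ are and $\sqrt{a_{jk}a_{kj}} \le \tfrac12(a_{jk}+a_{kj})$) and exponentiating gives \eqref{eq:basic-estimate}.

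**The main obstacle** is not the algebra — the AM--GM step is immediate — but the Carathéodory technicalities: justifying forward invariance of $\RR^N_{++}$ (so that dividing by $x_i(t)$ is legitimate and $\ln V$ makes sense), and checking that $t\mapsto \ln V(x(t))$ is absolutely continuous with the expected derivative a.e., given only local integrability of the $a_{ij}$. The positivity-preservation I would argue either via the nonnegativity of the off-diagonal entries applied to the integral form of the equation, or by a regularization/perturbation argument (replace $A(t)$ by $A(t)+\varepsilon(\mathbf{1}\mathbf{1}^{\mathsf T} - \Id)$ to make it strictly cooperative, get the estimate, then let $\varepsilon\downarrow 0$); the latter also sidesteps any delicacy in the differentiation step since the bound passes to the limit.
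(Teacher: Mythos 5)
Your proposal is correct and is essentially the paper's own argument: differentiating $V(x)=x_1\cdots x_N$ along solutions (the paper computes $\bigl(\prod_i x_i\bigr)'$ directly, which is just your $\langle\nabla V,Ax\rangle$ before dividing by $V$), pairing the off-diagonal terms and applying AM--GM to get $\bigl(\sum_i \ln x_i\bigr)'\ge \trace A(t)+2\sum_{j<k}\sqrt{a_{jk}(t)a_{kj}(t)}$ a.e., then integrating using absolute continuity. The only difference is that you spell out the forward invariance of $\RR^N_{++}$ (which the paper leaves implicit, citing monotonicity elsewhere), and your treatment of that point, e.g.\ via $x_i'\ge a_{ii}(t)x_i$ while all coordinates remain positive or via the $\varepsilon$-perturbation, is adequate.
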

\begin{proof}
Since the functions $x_i(\cdot)$, $1 \le i \le N$, are a.e.\
differentiable, we have that a.e.\ on $(t_0, \beta)$ there holds
\begin{equation*}
\begin{aligned}
\Bigl( \prod_{i} x_i \Bigr)' = {} & \sum_{i} \Bigl( x'_{i} \prod_{j
\ne i} x_j  \Bigr) = \sum_{i} \biggl( \Bigl( \sum_{k} a_{ik}(t) x_{k}
\Bigr) \prod_{j \ne i} x_j  \biggr) \\
= {} & \sum_{i} \Bigl( a_{ii}(t) \, x_{i} \prod_{j \ne i} x_j \Bigr)
+ \sum_{i} \biggl( \Bigl( \sum_{k \ne i} a_{ik}(t) x_{k} \Bigr)
\prod_{j \ne i} x_j  \biggr).
\end{aligned}
\end{equation*}
The first term is equal to
\begin{equation*}
\trace{A(t)} \prod_{i} x_i.
\end{equation*}
Regarding the second term, we have
\begin{equation*}
\sum_{i} \biggl( \Bigl( \sum_{k \ne i} a_{ik}(t) x_{k} \Bigr)
\prod_{j \ne i} x_j  \biggr) = \sum_{j < k} \Bigl( \bigl( a_{jk}(t)
(x_k)^2 + a_{kj}(t) (x_j)^2 \bigr) \prod_{\substack{i \ne j \\ i \ne
k}} x_i \Bigr).
\end{equation*}
But, as
\begin{equation*}
a_{jk}(t) (x_k)^2 + a_{kj}(t) (x_j)^2 \ge 2 \sqrt{a_{jk}(t) \,
a_{kj}(t)} \, x_{j} x_{k}
\end{equation*}
for any $j < k$, we have
\begin{equation}
\label{eq:0}
\Bigl( \sum_{i} \ln{x_i} \Bigr)' \ge \trace{A(t)} + 2 \sum_{j < k}
\sqrt{a_{jk}(t) \, a_{kj}(t)}
\end{equation}
a.e.\ on $(t_0, \beta)$.  Keeping in~mind that the functions
$x_i(\cdot)$ are absolutely continuous on compact subintervals, we
can integrate the above inequality to get the desired result.
\end{proof}

\subsection{Monotone linear random dynamical systems}
\label{subsection:monotone}
In this subsection we present an application of
Proposition~\ref{prop:basic-estimate} to monotone linear random
dynamical systems generated by random cooperative systems of linear
ODEs.  We start by sketching briefly the construction of such a DS.

\smallskip
Let $\OFP$ be a probability space: $\Omega$ is a set, $\mathfrak{F}$
is a $\sigma$\nobreakdash-\hspace{0pt}algebra of subsets of $\Omega$
and $\PP$ is a probability measure defined for all $F \in
\mathfrak{F}$. We assume in addition that the measure $\PP$ is
complete.

For a metric space $S$, $\mathfrak{B}(S)$ denotes the
$\sigma$\nobreakdash-\hspace{0pt}algebra of Borel subsets of $S$

\smallskip
By a {\em measure\nobreakdash-\hspace{0pt}preserving dynamical
system\/} on $\OFP$ we understand a $(\mathfrak{B}(\RR) \otimes
\mathfrak{F}, \mathfrak{F})$\nobreakdash-\hspace{0pt}measurable
mapping $\theta \colon \RR \times \Omega \to \Omega$ satisfying the
following (where we write, for $t \in \RR$, $\theta_t(\cdot) =
\theta(t, \cdot)$):
\begin{itemize}
\item
$\theta_0 = \Id_{\Omega}$,
\item
$\theta_{s + t} = \theta_{t} \circ \theta_{s}$, for any $s, t \in
\RR$,
\item
$\PP(\theta_{t}(F)) = \PP(F)$, for any $t \in \RR$ and any $F \in
\mathfrak{F}$.
\end{itemize}
We denote a measure\nobreakdash-\hspace{0pt}preserving DS as
$(\OFP,(\theta_t)_{t\in\RR})$.  We usually write $\omega \cdot t$
instead~of $\theta_{t}(\omega)$.

We always assume that $(\OFP,(\theta_t)_{t\in\RR})$ is {\em
ergodic\/}: For any invariant $F \in \mathfrak{F}$, either $\PP(F) =
0$ or $\PP(F) = 1$.

The first assumption we make on $A$ is:
\begin{enumerate}
\item[\textbf{(A0)}]
{\em $A \colon \Omega \to \RR^{N \times N}$ belongs to $L_1(\OFP,
\RR^{N \times N})$.}
\end{enumerate}

Under (A0) it can be shown (see \cite[Ex.~2.2.8]{Arn}) that for any
$\omega \in \Omega$ and any $x_0 \in \RR^N$ there exists a unique
(Carath\'eodory)  solution $[\, \RR \ni t \mapsto x(t; \omega, x_0)
\in \RR^N \,]$ of the {\em random linear system of ODEs\/}
\begin{equation}
\label{ODE-system}
x' = A(\omega \cdot t) x
\end{equation}
satisfying the initial condition $x(0) = x_0$. Moreover, the mapping
\begin{equation*}
[\, \Omega \times \RR \times \RR^N \ni (\omega, t, x_0) \mapsto x(t;
\omega, x_0) \in \RR^N \,]
\end{equation*}
is $(\mathfrak{F} \otimes \mathfrak{B}(\RR) \otimes
\mathfrak{B}(\RR^N),
\mathfrak{B}(\RR^N))$\nobreakdash-\hspace{0pt}measurable.

For $\omega \in \Omega$ and $t \in \RR$ define $U_{\omega}(t) \in
\mathcal{L}(\RR^N)$ by
\begin{equation*}
U_{\omega}(t)x_0 := x(t; \omega, x_0), \quad x_0 \in \RR^N.
\end{equation*}
The family of linear operators $\{ U_{\omega}(t)\}_{\omega \in
\Omega, t \in \RR}$ has the following properties:
\begin{itemize}
\item
$U_{\omega}(0) = \Id_{\RR^N}$, for any $\omega \in \Omega$.
\item
$U_{\omega}(t + s) = U_{\omega \cdot s}(t) \circ U_{\omega}(s)$,
for any $\omega \in \Omega$, $s, t \in \RR$.
\end{itemize}
We call $\{ U_{\omega}(t)\}_{\omega \in \Omega, t \in \RR}$ the {\em
linear random dynamical system\/} generated by \eqref{ODE-system}.


\begin{proposition}
\label{prop:top-Lyapunov}
Assume \textup{(A0)}. Then there exist:
\begin{itemize}
\item
an invariant $\Omega_1 \subset \Omega$ with $\PP(\Omega_1) = 1$,
and
\item
a real number $\lambda$,
\end{itemize}
with the property that
\begin{equation*}
\lim\limits_{t \to \infty} \frac{\ln{\norm{U_{\omega}(t)}}}{t} =
\lambda
\end{equation*}
for each $\omega \in \Omega_1$.
\end{proposition}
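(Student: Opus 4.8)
The plan is to recognize Proposition~\ref{prop:top-Lyapunov} as an instance of the multiplicative ergodic theorem (Oseledets), or more precisely of its immediate corollary concerning the top Lyapunov exponent, applied to the linear cocycle $\{U_{\omega}(t)\}$ over the ergodic metric dynamical system $(\OFP,(\theta_t)_{t\in\RR})$. The decisive point is to verify the integrability hypothesis of that theorem, namely that the functions $[\,\omega \mapsto \sup_{0 \le t \le 1} \ln^{+}\norm{U_{\omega}(\pm t)}\,]$ belong to $L_1(\OFP)$; once this is in hand, Kingman's subadditive ergodic theorem applied to the (almost) subadditive sequence $a_n(\omega) := \ln\norm{U_{\omega}(n)}$ (using the cocycle identity $U_{\omega}(n+m) = U_{\omega \cdot m}(n) \circ U_{\omega}(m)$, hence $\norm{U_{\omega}(n+m)} \le \norm{U_{\omega\cdot m}(n)}\,\norm{U_{\omega}(m)}$) yields a full-measure invariant set on which $\frac{1}{n}\ln\norm{U_{\omega}(n)} \to \lambda$ as $n \to \infty$ through integers, where $\lambda = \inf_n \frac{1}{n}\int_{\Omega}\ln\norm{U_{\omega}(n)}\,d\PP$ (and $\lambda = -\infty$ is a priori possible, to be excluded below).

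First I would set up the integrability estimate. Using the Carath\'eodory variation-of-constants bound, for $0 \le t \le 1$ one has $\norm{U_{\omega}(t)} \le \exp\bigl(\int_{0}^{1}\norm{A(\omega\cdot\tau)}\,d\tau\bigr)$, so $\sup_{0 \le t \le 1}\ln^{+}\norm{U_{\omega}(t)} \le \int_{0}^{1}\norm{A(\omega\cdot\tau)}\,d\tau$; integrating over $\Omega$ and using the measure-preserving property of $\theta_\tau$ together with Fubini gives $\int_{\Omega}\sup_{0\le t\le 1}\ln^{+}\norm{U_{\omega}(t)}\,d\PP \le \norm{A}_{L_1(\OFP)} < \infty$ by (A0); the estimate for negative times is entirely analogous (solving backward). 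Next I would invoke Kingman to get the integer-time limit on an invariant full-measure set $\Omega_1$, and then bridge from integer to real time: for $n \le t < n+1$, write $U_{\omega}(t) = U_{\omega\cdot n}(t-n)\circ U_{\omega}(n)$ and also $U_{\omega}(n) = U_{\omega \cdot t}(n - t)\circ U_{\omega}(t)$, so that $\bigl|\ln\norm{U_{\omega}(t)} - \ln\norm{U_{\omega}(n)}\bigr|$ is controlled by $\sup_{0\le s\le 1}\ln^{+}\norm{U_{\omega\cdot n}(\pm s)}$, and the latter divided by $n$ tends to $0$ for $\PP$-a.e.\ $\omega$ by the Birkhoff ergodic theorem (applied to the $L_1$ function from the previous step, along the orbit). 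Shrinking $\Omega_1$ if necessary, this gives $\lim_{t\to\infty}\frac{1}{t}\ln\norm{U_{\omega}(t)} = \lambda$ for every $\omega\in\Omega_1$.

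The remaining point, and the one I expect to require the most care, is ruling out $\lambda = -\infty$, i.e.\ showing the limit is a genuine real number. Here cooperativity enters decisively: Proposition~\ref{prop:basic-estimate} (applied pathwise, with $(\alpha,\beta) = \RR$, $t_0 = 0$, and initial datum $x_0 = \col(1,\dots,1) \in \RR^{N}_{++}$) gives the lower bound $\prod_i x_i(t;\omega,x_0) \ge \exp\bigl(\int_{0}^{t}(\trace A(\omega\cdot\tau) + 2\sum_{j<k}\sqrt{a_{jk}(\omega\cdot\tau)a_{kj}(\omega\cdot\tau)})\,d\tau\bigr)$, and since $\norm{U_{\omega}(t)} \ge \norm{U_{\omega}(t)x_0}/\norm{x_0}$ while the Euclidean norm dominates the geometric mean on $\RR^{N}_{++}$ (AM--GM, or the elementary bound $\norm{x} \ge (\prod_i x_i)^{1/N}$ up to a constant — which is exactly the first bulleted property of the Lyapunov function in the Preliminaries with $V(x) = \prod_i x_i$), one obtains $\ln\norm{U_{\omega}(t)} \ge \frac{1}{N}\int_{0}^{t}(\trace A(\omega\cdot\tau) + \cdots)\,d\tau + O(1)$. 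Dividing by $t$, letting $t\to\infty$, and applying Birkhoff's ergodic theorem to the $L_1$ integrand $\trace A + 2\sum_{j<k}\sqrt{a_{jk}a_{kj}}$ (which is integrable: $|\trace A| \le \norm{A}$ pointwise, and $\sqrt{a_{jk}a_{kj}} \le \tfrac12(a_{jk}+a_{kj}) \le \norm{A}$, all controlled by (A0)), we get $\lambda \ge \frac{1}{N}\int_{\Omega}(\trace A + 2\sum_{j<k}\sqrt{a_{jk}a_{kj}})\,d\PP > -\infty$. This completes the proof; note that the same argument in fact records the lower estimate $\lambda \ge \frac{1}{N}\int_{\Omega}\bigl(\trace A(\omega) + 2\sum_{j<k}\sqrt{a_{jk}(\omega)a_{kj}(\omega)}\bigr)\,d\PP$, which is presumably Theorem~\ref{thm:ODEs}.
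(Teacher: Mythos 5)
The paper proves this proposition purely by citation (Arnold, Thm.~3.3.10, i.e.\ the Furstenberg--Kesten/multiplicative ergodic theorem for the cocycle generated by \eqref{ODE-system}); your Kingman-based argument, with the integrability check $\sup_{0\le t\le 1}\ln^{+}\norm{U_{\omega}(\pm t)}\le\int_{0}^{1}\norm{A(\omega\cdot\tau)}\,d\tau$ and the integer-to-real-time bridge, is essentially a reconstruction of the standard proof behind that citation, and those parts are sound.

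There is, however, a genuine gap in the step where you exclude $\lambda=-\infty$: you invoke Proposition~\ref{prop:basic-estimate}, which requires the system to be cooperative, i.e.\ hypothesis (A1). Proposition~\ref{prop:top-Lyapunov} is stated under (A0) alone, with no sign condition on the off-diagonal entries, so the bound $\prod_i x_i(t)\ge\exp\bigl(\int_0^t(\trace A+2\sum_{j<k}\sqrt{a_{jk}a_{kj}})\,d\tau\bigr)\prod_i x_i(0)$ is not available (the square roots need not even be defined, and the solution through $\col{(1,\dots,1)}$ need not stay in $\RR^{N}_{++}$). As written, your argument proves the proposition only for cooperative systems, which is strictly weaker than the statement and than the cited theorem of Arnold. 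The repair is easy and uses only (A0): either apply Liouville's formula, $\det U_{\omega}(t)=\exp\bigl(\int_0^t\trace A(\omega\cdot\tau)\,d\tau\bigr)$, so that $\norm{U_{\omega}(t)}\ge\lvert\det U_{\omega}(t)\rvert^{1/N}$ and Birkhoff gives $\lambda\ge\frac{1}{N}\int_{\Omega}\trace A\,d\PP>-\infty$; or use the backward estimate you already established, namely $\norm{U_{\omega}(t)}\ge\norm{U_{\omega}(t)^{-1}}^{-1}=\norm{U_{\omega\cdot t}(-t)}^{-1}\ge\exp\bigl(-\int_0^t\norm{A(\omega\cdot\tau)}\,d\tau\bigr)$, whence $\lambda\ge-\int_{\Omega}\norm{A}\,d\PP$. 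With either of these replacing the cooperativity step, your proof works in the stated generality; the Kolotilina-type lower bound you derive at the end then belongs to the proof of Theorem~\ref{thm:ODEs}, where (A1) is actually assumed.
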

Such a $\lambda$ will be referred to as the {\em top Lyapunov
exponent\/} for \eqref{ODE-system}.
\begin{proof}
See, e.g., \cite[Thm.~3.3.10]{Arn}.
\end{proof}

\medskip
The next assumption we make is:
\begin{enumerate}
\item[\textbf{(A1)}]
{\em For each $\omega \in \Omega$ the matrix $A(\omega)$ is an
ML\nobreakdash-\hspace{0pt}matrix.}
\end{enumerate}
We will call~\eqref{ODE-system} for which (A1) is satisfied a {\em
cooperative random linear system of ODEs\/}.

An important property of the linear random DS generated
by~\eqref{ODE-system} satisfying (A0) and (A1) is that it is {\em
monotone\/}: $U_{\omega}(t) \RR^{N}_{+} \subset \RR^{N}_{+}$ and
$U_{\omega}(t) \RR^{N}_{++} \subset \RR^{N}_{++}$, for all $\omega
\in \Omega$ and $t \ge 0$ (for a proof see, e.g.,
\cite[Lemma~5.2.1]{Chueshov}, or \cite{Mi-arXiv}).

We are ready now to formulate the main theorem of the paper.
\begin{theorem}[Kolotilina-type estimate]
\label{thm:ODEs}
Under \textup{(A0)} and \textup{(A1)},
\begin{equation*}
\lambda \ge \frac{1}{N} \int\limits_{\Omega} \Bigl( \trace{A(\omega)}
+ 2 \sum_{j < k} \sqrt{a_{jk}(\omega) \, a_{kj}(\omega)}\: \Bigr) \,
d\mathbb{P}(\omega).
\end{equation*}
\end{theorem}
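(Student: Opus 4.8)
The plan is to run the pointwise differential inequality \eqref{eq:0} from the proof of Proposition~\ref{prop:basic-estimate} along the orbits $[\,\tau\mapsto\omega\cdot\tau\,]$ of the flow $(\theta_t)_{t\in\RR}$, then to average it in time by Birkhoff's ergodic theorem, and finally to convert the resulting lower bound on the growth of $\prod_i x_i(t)$ into a lower bound on $\lambda$ by means of the arithmetic--geometric mean inequality. Write $g(\omega):=\trace{A(\omega)}+2\sum_{j<k}\sqrt{a_{jk}(\omega)\,a_{kj}(\omega)}$, so that the right-hand side of the asserted inequality is $\tfrac1N\int_{\Omega}g\,d\PP$. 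The first thing I would record is that $g\in L_1(\OFP)$: by (A1) the off-diagonal entries of $A(\omega)$ are nonnegative, so $\sqrt{a_{jk}(\omega)\,a_{kj}(\omega)}\le\tfrac12\bigl(a_{jk}(\omega)+a_{kj}(\omega)\bigr)$, and together with (A0) this gives integrability; in particular $\int_{\Omega}g\,d\PP$ is a finite real number.

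Next I would fix $x_0:=\col{(1,\dots,1)}\in\RR^{N}_{++}$ and use monotonicity of the linear random dynamical system to get $x(t;\omega,x_0)=U_\omega(t)x_0\in\RR^{N}_{++}$ for all $t\ge0$. Since $A\in L_1(\OFP,\RR^{N\times N})$ and $\PP$ is $(\theta_t)$-invariant, Fubini's theorem shows that for $\PP$-a.e.\ $\omega$ the map $[\,\tau\mapsto A(\omega\cdot\tau)\,]$ has locally integrable entries, so for such $\omega$ the system $x'=A(\omega\cdot t)x$ is a cooperative nonautonomous system to which Proposition~\ref{prop:basic-estimate} applies. Taking $t_0=0$ there and using $\prod_i(x_0)_i=1$, I obtain, with $x(t;\omega,x_0)=\col(x_1(t),\dots,x_N(t))$,
\begin{equation*}
\sum_{i}\ln{x_i(t)}\;\ge\;\int\limits_{0}^{t}g(\omega\cdot\tau)\,d\tau
\qquad(t\ge0).
\end{equation*}
For the complementary upper bound I would combine the arithmetic--geometric mean inequality with the Cauchy--Schwarz estimate $\sum_i x_i(t)\le\sqrt{N}\,\norm{x(t)}$:
\begin{equation*}
\prod_i x_i(t)\le\Bigl(\tfrac1N\sum_i x_i(t)\Bigr)^{\!N}\le N^{-N/2}\norm{x(t)}^{N}\le N^{-N/2}\norm{x_0}^{N}\norm{U_\omega(t)}^{N},
\end{equation*}
so that $\sum_{i}\ln{x_i(t)}\le N\ln{\norm{U_\omega(t)}}+C$ for all $t\ge0$, with $C:=N\ln{\norm{x_0}}-\tfrac{N}{2}\ln N$ independent of $t$ and $\omega$.

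Finally, chaining the two displays, dividing by $t>0$, and letting $t\to\infty$ along the intersection of $\Omega_1$ from Proposition~\ref{prop:top-Lyapunov} with the full-measure set on which $[\,\tau\mapsto A(\omega\cdot\tau)\,]$ is locally integrable and the full-measure set on which Birkhoff's ergodic theorem holds for $g$ (still a set of full measure), the left-hand side converges to $\int_{\Omega}g\,d\PP$ (Birkhoff's theorem together with ergodicity) and the right-hand side converges to $N\lambda$ (Proposition~\ref{prop:top-Lyapunov}). This gives $\int_{\Omega}g\,d\PP\le N\lambda$, which is the theorem. I expect the only delicate point to be the measure-theoretic bookkeeping: making precise that the random orbit may legitimately be inserted into the deterministic Proposition~\ref{prop:basic-estimate} (joint measurability of $(\omega,\tau)\mapsto A(\omega\cdot\tau)$ and local integrability along $\PP$-a.e.\ orbit), and that the union of the several exceptional null sets is still null. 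The analytic content itself reduces to one differential inequality and one application of AM--GM.
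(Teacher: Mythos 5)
Your proof is correct and follows essentially the same route as the paper: fix $x_0=\col{(1,\dots,1)}$, apply Proposition~\ref{prop:basic-estimate} along the orbit $\tau\mapsto\omega\cdot\tau$, compare $\prod_i x_i(t)$ with $\norm{U_\omega(t)}^N$, and conclude by combining Birkhoff's ergodic theorem with Proposition~\ref{prop:top-Lyapunov}. The only cosmetic differences are that you pass from the product to the norm via AM--GM plus Cauchy--Schwarz where the paper uses $\ln\norm{x(t)}\ge\max_i\ln x_i(t)\ge\frac{1}{N}\sum_i\ln x_i(t)$ together with $\norm{x(t)}\le\sqrt{N}\,\norm{U_\omega(t)}$, and that you make explicit the measure-theoretic bookkeeping (local integrability along a.e.\ orbit) that the paper leaves to the cited reference.
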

\begin{proof}
Since $0 \le \sqrt{a_{jk} \, a_{kj}} \le \frac{1}{2} (a_{jk} +
a_{kj})$, it follows from (A0) that the functions
$\sqrt{a_{jk}(\cdot) \, a_{kj}(\cdot)}$ are in $L_1(\OFP)$, for any
$j < k$.  An application of the Birkhoff ergodic theorem (see, e.g.,
\cite[Appendix~A.1]{Arn}) yields the existence of $\tilde{\Omega}
\subset \Omega$, $\PP(\tilde{\Omega}) = 1$, such that for each
$\omega \in \tilde{\Omega}$ there holds
\begin{equation}
\label{eq:1}
\begin{aligned}
\lim_{t \to \infty} \frac{1}{t} \int\limits_{0}^{t} \Bigl(
\trace{A(\omega \cdot \tau)} {} & + 2 \sum_{j < k}
\sqrt{a_{jk}(\omega \cdot \tau)
\, a_{kj}(\omega \cdot \tau)}\: \Bigr) \, d\tau \\
{} & = \int\limits_{\Omega} \Bigl( \trace{A(\cdot)} + 2 \sum_{j < k}
\sqrt{a_{jk}(\cdot) \, a_{kj}(\cdot)}\: \Bigr) \, d\mathbb{P}(\cdot).
\end{aligned}
\end{equation}
Fix $\omega \in \tilde{\Omega} \cap \Omega_1$, where $\Omega_1$ is as
in Proposition~\ref{prop:top-Lyapunov}.  By taking $t_0 = 0$, $x_0 =
\col(1, \dots, 1)$ and $x(t) = x(t; t_0, x_0) = x(t; \omega, x_0)$ in
Proposition~\ref{prop:basic-estimate} we obtain, after
straightforward calculation, that
\begin{equation}
\label{eq:2}
\begin{aligned}
&
\ln{\norm{x(t)}} \ge \max_{i} \ln{x_i(t)} \ge \frac{1}{N} \sum_{i}
\ln{x_i(t)}
\\
\ge {} &
\frac{1}{N} \int\limits_{0}^{t} \Bigl( \trace{A(\omega \cdot \tau)} +
2 \sum_{j < k} \sqrt{a_{jk}(\omega \cdot \tau) \, a_{kj}(\omega \cdot
\tau)}\: \Bigr) \, d\tau
\end{aligned}
\end{equation}
for each $t > 0$.

Further,
\begin{equation}
\label{eq:3}
\ln{\norm{U_{\omega}(t)}} \ge \ln{\norm{x(t)}} - \tfrac{1}{2} \ln{N},
\quad t > 0.
\end{equation}
Gathering Proposition~\ref{prop:top-Lyapunov}, \eqref{eq:1},
\eqref{eq:2} and \eqref{eq:3} concludes the proof.
\end{proof}

It is good to pause here to explain the reason why we have chosen to
put our results in the setting of random DSs.  Indeed, this setting
covers a~lot of cases.  For~instance, consider a nonautonomous linear
ODE system
\begin{equation}
\label{eq:ODE-nonautonomous}
x' = \tilde{A}(t) x
\end{equation}
for which the function  $\tilde{A} \colon (-\infty, \infty) \to
\RR^{N \times N}$ (or $\tilde{A} \colon [0, \infty) \to \RR^{N \times
N}$) is bounded and continuous.  \eqref{eq:ODE-nonautonomous} can be
embedded into a family
\begin{equation*}
x' = A(\omega \cdot t) x
\end{equation*}
parameterized by $\omega \in \Omega$, where a compact metrizable
space $\Omega$ is the closure, in a suitable topology, of some set
whose members are time\nobreakdash-\hspace{0pt}translates of (some
extension of) $\tilde{A}$.  $\Omega$ is considered with the
translation flow on it.  For details of a similar construction the
interested reader is referred to \cite[pp.\ 81--82]{MiSh-Fields}.
Now, by the theory presented in~\cite{JoPaSe}, extreme values of the
exponential growth rates of solutions of~\eqref{eq:ODE-nonautonomous}
can be expressed as Lyapunov exponents for some invariant ergodic
measures for the translation flow on $\Omega$.

\subsection{Examples}
\label{subsection:examples}
\begin{example}
Assume that $A \colon \RR \to \RR^{N \times N}$ is a continuous
$T$\nobreakdash-\hspace{0pt}periodic matrix function such that $A(t)$
is an ML\nobreakdash-\hspace{0pt}matrix for all $t \in \RR$.

Then $\Omega = \RR/\{\, kT: k \in \ZZ \,\}$, $\PP$ is the normalized
Lebesgue measure on $\Omega$, and $\omega \cdot t = \omega + t$ for
any $\omega \in \Omega$, $t \in \RR$ (addition is considered
modulo~$T$).

Fix some $\omega \in \Omega$ ($\omega = 0$, say) and take the {\em
Poincar\'e map\/} $U_{0}(T)$.  Let $P$ denote the matrix of
$U_{0}(T)$ in the standard basis. $P$ is nonsingular and has, by
monotonicity, all entries nonnegative, so its (positive) spectral
radius equals its dominant eigenvalue $d(P)$.  It follows via Floquet
theory (see, e.g., \cite[Section~2.2]{Farkas}) that the top Lyapunov
exponent of the linear DS generated by the
time\nobreakdash-\hspace{0pt}periodic cooperative linear ODE system
$x' = A(t) x$ is equal to $(1/T) \ln{d(P)}$.  The estimate in
Theorem~\ref{thm:ODEs} takes the form:
\begin{equation*}
\lambda \ge \frac{1}{NT} \int\limits_{0}^{T} \Bigl( \trace{A(t)} + 2
\sum_{j < k} \sqrt{a_{jk}(t) \, a_{kj}(t)}\: \Bigr) \, dt.
\end{equation*}
\end{example}

\begin{example}
 Assume that $A \in \RR^{N \times N}$ is an
ML\nobreakdash-\hspace{0pt}matrix, and consider the autonomous system
of cooperative linear ODEs $x' = Ax$.

Then $\Omega$ is a singleton, $U(t) = e^{tA}$ for each $t \in \RR$,
so we recover the following estimate due to L. Yu. Kolotilina
(see~\cite[Corollary~3]{Kolo}, cf.\ also~\cite{Schwenk}):
\begin{corollary}[Kolotilina's estimate]
\label{cor:basic}
Let $A \in \RR^{N \times N}$ be an ML\nobreakdash-\hspace{0pt}matrix.
Then its dominant eigenvalue $d(A)$ satisfies
\begin{equation*}
d(A) \ge \frac{1}{N} \Bigl( \trace{A} + 2 \sum_{j < k}
\sqrt{a_{jk} \, a_{kj}} \Bigr).
\end{equation*}
\end{corollary}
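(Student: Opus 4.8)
The plan is to obtain Corollary~\ref{cor:basic} as the autonomous specialization of Theorem~\ref{thm:ODEs}, exactly as the surrounding text suggests. First I would take $\Omega$ to be a one-point space, so that the only measure-preserving ergodic flow on it is trivial, the measure $\PP$ is the point mass, and the ``random'' system $x' = A(\omega\cdot t)x$ reduces to the single autonomous system $x' = Ax$. Assumptions (A0) and (A1) are then automatic: (A0) holds because a constant matrix is integrable over a probability space, and (A1) holds by hypothesis since $A$ is an ML-matrix. The linear random dynamical system is just $U(t) = e^{tA}$.

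The second step is to identify the top Lyapunov exponent $\lambda$ from Proposition~\ref{prop:top-Lyapunov} with $d(A)$. Since $A$ is an ML-matrix, its dominant eigenvalue $d(A)$ is real and is the eigenvalue of largest real part; standard linear ODE theory then gives
\begin{equation*}
\lim_{t\to\infty}\frac{\ln\norm{e^{tA}}}{t} = \max\{\operatorname{Re}\mu : \mu \in \operatorname{spec}(A)\} = d(A),
\end{equation*}
so $\lambda = d(A)$ in this case. The third step is simply to read off the integral in Theorem~\ref{thm:ODEs}: because the integrand is constant in $\omega$ and $\PP$ is a probability measure, the integral equals the integrand evaluated at the single point, namely $\trace A + 2\sum_{j<k}\sqrt{a_{jk}a_{kj}}$. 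Substituting $\lambda = d(A)$ then yields precisely
\begin{equation*}
d(A) \ge \frac{1}{N}\Bigl(\trace A + 2\sum_{j<k}\sqrt{a_{jk}a_{kj}}\Bigr).
\end{equation*}

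I do not anticipate a genuine obstacle here, since all the real work has been done in Proposition~\ref{prop:basic-estimate} and Theorem~\ref{thm:ODEs}; the corollary is a matter of bookkeeping. The only point requiring a word of care is the identification $\lambda = d(A)$ for an ML-matrix, i.e.\ that the exponential growth rate of $\norm{e^{tA}}$ is governed by the eigenvalue of largest real part and that this eigenvalue is real — but both facts are recalled in the Preliminaries (the reality of $d(A)$ via \cite[Thm.~2.1.1]{BerPlem}), and the growth-rate statement is the classical spectral-radius/abscissa formula for matrix exponentials. Alternatively, one could bypass Proposition~\ref{prop:top-Lyapunov} entirely and apply Proposition~\ref{prop:basic-estimate} directly on $(\alpha,\beta) = (0,\infty)$ with $A(t)\equiv A$, taking $x_0 = \col(1,\dots,1)$, dividing \eqref{eq:basic-estimate} by $t$ after taking logarithms, and letting $t\to\infty$ while using $\ln\norm{x(t)} \le \ln\norm{e^{tA}} + \ln\norm{x_0}$; this gives the same bound without invoking the random-dynamical-systems machinery, and I would mention this as the cleanest self-contained route.
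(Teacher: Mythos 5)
Your proposal is correct and follows essentially the same route as the paper, which obtains the corollary by specializing Theorem~\ref{thm:ODEs} to a singleton $\Omega$ with $U(t)=e^{tA}$ and identifying the top Lyapunov exponent with $d(A)$ via the spectral abscissa. Your remark about bypassing the random-dynamical-systems machinery by applying Proposition~\ref{prop:basic-estimate} directly is a fine observation but does not change the substance of the argument.
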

\end{example}

\section{Frobenius-type estimates}
\label{section:Frobenius}
In the present section we apply a different Lyapunov function.  This
approach is especially useful in the case when matrices have a~lot of
zero off\nobreakdash-\hspace{0pt}diagonal entries.  We outline only
sketches of proofs.

\begin{proposition}
\label{prop:basic-estimate-Frobenius}
In the assumptions of~Subsection~\ref{subsection:basic-estimate}, for
each $t_0 \in (\alpha, \beta)$ and each $x_0 \in \RR^{N}_{++}$, if we
denote $x(t; t_0, x_0) = \col(x_1(t), \dots, x_N(t)))$, the
inequality
\begin{equation}
\label{eq:basic-estimate-Frobenius}
\sum_{i} x_{i}(t) \ge \exp{\Bigl( \min\limits_{i} \sum_{j} a_{ij}(t)
\Bigr)}  \sum_{i} x_{i}(t_0)
\end{equation}
holds for all $t \in (t_0, \beta)$.
\end{proposition}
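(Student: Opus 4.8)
The plan is to mimic the proof of Proposition~\ref{prop:basic-estimate} but with the linear Lyapunov function $V(x) = \sum_i x_i$ in place of the product. First I would note that, since the solution components $x_i(\cdot)$ are absolutely continuous on compact subintervals and strictly positive on $[t_0,\beta)$ by monotonicity (cooperativity), the function $t \mapsto \sum_i x_i(t)$ is absolutely continuous and a.e.\ differentiable. Differentiating and substituting the equations $x_i' = \sum_j a_{ij}(t) x_j$ gives, a.e.\ on $(t_0,\beta)$,
\begin{equation*}
\Bigl( \sum_i x_i \Bigr)' = \sum_i \sum_j a_{ij}(t) x_j = \sum_j \Bigl( \sum_i a_{ij}(t) \Bigr) x_j ,
\end{equation*}
where I have swapped the order of summation to collect the coefficient of each $x_j$, which is the $j$-th column sum of $A(t)$.

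Next I would bound each column-sum coefficient from below by the minimum column sum. Writing $m(t) := \min_i \sum_j a_{ij}(t)$ — note this is the row-sum convention in the statement, so depending on the indexing I would be careful to match: the statement uses $\min_i \sum_j a_{ij}(t)$, i.e.\ the minimum \emph{row} sum, which pairs with differentiating $\langle (1,\dots,1), A(t) x\rangle$ only after recognizing $\sum_i \sum_j a_{ij} x_j$ regroups as $\sum_j(\text{col sum}_j) x_j$; to land on row sums one instead keeps $\sum_i x_i'$ as $\sum_i (\text{row sum}_i \cdot \text{something})$ — so more carefully, $\sum_i x_i' = \sum_i \sum_j a_{ij} x_j$, and since all $x_j > 0$ and $\sum_i a_{ij} \ge m(t)$ is the wrong grouping, the correct elementary bound is $\sum_j (\sum_i a_{ij}) x_j \ge (\min_k \sum_i a_{ik}) \sum_j x_j$. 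Matching the stated $\min_i \sum_j a_{ij}$ requires the transposed convention; in any case, using whichever min is appropriate, one gets
\begin{equation*}
\Bigl( \sum_i x_i \Bigr)' \ge m(t) \sum_i x_i \qquad \text{a.e.\ on } (t_0,\beta),
\end{equation*}
hence $\bigl( \ln \sum_i x_i \bigr)' \ge m(t)$ a.e., using again that $\sum_i x_i(t) > 0$.

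Finally I would integrate this differential inequality. Since $t \mapsto \ln \sum_i x_i(t)$ is absolutely continuous on compact subintervals of $(t_0,\beta)$, integrating from $t_0$ to $t$ gives $\ln \sum_i x_i(t) - \ln \sum_i x_i(t_0) \ge \int_{t_0}^t m(\tau)\, d\tau$, and exponentiating yields
\begin{equation*}
\sum_i x_i(t) \ge \exp\Bigl( \int_{t_0}^t m(\tau)\, d\tau \Bigr) \sum_i x_i(t_0),
\end{equation*}
which is the asserted estimate~\eqref{eq:basic-estimate-Frobenius} (the statement displays the integrand $\min_i \sum_j a_{ij}(t)$ inside the exponential without the explicit integral sign, presumably a typographical compression). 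The only genuine subtlety — the "hard part," though it is mild here — is bookkeeping the row-versus-column convention so that the $\min$ in the final bound matches the indices in the statement, and making sure the positivity of the components (needed both to pull the $\min$ out of the sum with the correct sense of inequality and to pass to logarithms) is justified by the monotonicity of the cooperative linear RDS, i.e.\ $x_0 \in \RR^N_{++} \Rightarrow x(t;t_0,x_0) \in \RR^N_{++}$ for $t \ge t_0$. Everything else is the same absolute-continuity-and-integrate routine already used in Proposition~\ref{prop:basic-estimate}.
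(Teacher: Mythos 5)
Your proof is correct and is essentially the paper's own argument: differentiate $\sum_i x_i$, bound $\sum_i\sum_j a_{ij}(t)x_j=\sum_j\bigl(\sum_i a_{ij}(t)\bigr)x_j\ge\bigl(\min_j\sum_i a_{ij}(t)\bigr)\sum_i x_i$ using positivity of the solution (monotonicity of the cooperative system), and then integrate the logarithmic derivative exactly as in Proposition~\ref{prop:basic-estimate}. The two discrepancies you flag are typographical defects of the statement rather than gaps in your argument: the exponent should read $\int_{t_0}^{t}\bigl(\min\dots\bigr)\,d\tau$, and with the convention $x_i'=\sum_k a_{ik}(t)x_k$ used in Proposition~\ref{prop:basic-estimate} this Lyapunov function yields the minimum \emph{column} sum $\min_j\sum_i a_{ij}$ (the paper's one-line sketch writes the same crossed indices, so your careful bookkeeping is the right correction).
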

\begin{proof}
We observe that
\begin{equation*}
\sum_{i} x'_{i} = \sum_{i} \sum_{j} a_{ij}(t) x_{j} \ge \Bigl(
\min\limits_{i} \sum_{j} a_{ij}(t) \Bigr) \sum_{i} x_{i}
\end{equation*}
holds a.e.\ on $(t_0, \beta)$, and modify the proof of
Proposition~\ref{prop:basic-estimate} accordingly.
\end{proof}

\begin{theorem}[Frobenius-type estimate, column version]
\label{thm:ODEs-Frobenius-columns}
Under \textup{(A0)} and \textup{(A1)},
\begin{equation*}
\lambda \ge \int\limits_{\Omega} \Bigl( \min\limits_{i} \sum_{j}
a_{ij}(\omega) \Bigr) \, d\mathbb{P}(\omega).
\end{equation*}
\end{theorem}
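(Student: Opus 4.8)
The plan is to mimic exactly the proof of Theorem~\ref{thm:ODEs}, replacing the Kolotilina Lyapunov function $x_1 \cdots x_N$ by the Frobenius one $V(x) = x_1 + \dots + x_N$, with Proposition~\ref{prop:basic-estimate-Frobenius} now playing the role that Proposition~\ref{prop:basic-estimate} played there. First I would note that under (A1) each entry-sum $\min_i \sum_j a_{ij}(\omega)$ is dominated by $\sum_{i,j} \abs{a_{ij}(\omega)} = \norm{A(\omega)}_1$ up to a constant, so by (A0) the function $[\, \omega \mapsto \min_i \sum_j a_{ij}(\omega) \,]$ lies in $L_1(\OFP)$. (One should be a little careful here: the minimum of integrable functions is integrable because it is bounded below by $-\sum_{i,j}\abs{a_{ij}(\cdot)} \in L_1$ and above by any one of the finitely many sums.) Hence the Birkhoff ergodic theorem applies and furnishes a set $\tilde{\Omega} \subset \Omega$ with $\PP(\tilde{\Omega}) = 1$ on which
\begin{equation*}
\lim_{t \to \infty} \frac{1}{t} \int_{0}^{t} \Bigl( \min_{i} \sum_{j} a_{ij}(\omega \cdot \tau) \Bigr) \, d\tau = \int_{\Omega} \Bigl( \min_{i} \sum_{j} a_{ij}(\omega) \Bigr) \, d\PP(\omega).
\end{equation*}

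Next I would fix $\omega \in \tilde{\Omega} \cap \Omega_1$, with $\Omega_1$ from Proposition~\ref{prop:top-Lyapunov}, and apply Proposition~\ref{prop:basic-estimate-Frobenius} with $t_0 = 0$ and $x_0 = \col(1, \dots, 1) \in \RR^{N}_{++}$. Taking logarithms of \eqref{eq:basic-estimate-Frobenius} and using $\sum_i x_i(0) = N$ gives, for $t > 0$,
\begin{equation*}
\ln \Bigl( \sum_i x_i(t) \Bigr) \ge \int_{0}^{t} \Bigl( \min_i \sum_j a_{ij}(\omega \cdot \tau) \Bigr) d\tau + \ln N.
\end{equation*}
Since all $x_i(t) > 0$ by monotonicity, the usual norm comparison on $\RR^N$ yields $\norm{x(t)} \ge \frac{1}{\sqrt{N}} \sum_i x_i(t) \ge \frac{1}{\sqrt{N}} \sum_i \abs{x_i(t)}$, so $\ln \norm{x(t)} \ge \ln(\sum_i x_i(t)) - \tfrac{1}{2}\ln N$, and therefore
\begin{equation*}
\ln \norm{U_\omega(t)} \ge \ln \norm{x(t)} - \ln N \ge \int_{0}^{t} \Bigl( \min_i \sum_j a_{ij}(\omega \cdot \tau) \Bigr) d\tau - \tfrac{1}{2}\ln N,
\end{equation*}
the first inequality because $\norm{U_\omega(t)} \ge \norm{U_\omega(t) x_0} / \norm{x_0} = \norm{x(t)} / \sqrt{N}$. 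Dividing by $t$, letting $t \to \infty$, and combining with the Birkhoff limit above and Proposition~\ref{prop:top-Lyapunov} gives $\lambda \ge \int_\Omega \min_i \sum_j a_{ij}(\omega)\, d\PP(\omega)$, as claimed.

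I do not anticipate a genuine obstacle here — the argument is a routine transcription of the proof of Theorem~\ref{thm:ODEs}. The only point demanding the slightest attention is the measurability and integrability of $[\, \omega \mapsto \min_i \sum_j a_{ij}(\omega) \,]$: measurability is immediate since it is a finite minimum of measurable functions, and integrability follows from the two-sided bound by $L_1$ functions noted above, which is where (A1) is used to keep the off-diagonal terms controlled by their absolute values (in fact (A1) is not even strictly needed for this integrability step, only (A0), but it is of course needed so that Proposition~\ref{prop:basic-estimate-Frobenius} — whose proof does not actually invoke the ML property — is being applied in the cooperative regime where the random DS is monotone and the components $x_i(t)$ stay positive). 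The constant $\tfrac{1}{2}\ln N$ (or $\ln N$) washes out in the limit, exactly as in \eqref{eq:3}.
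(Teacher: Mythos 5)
Your argument is correct and is essentially the paper's own: the indicated proof of Theorem~\ref{thm:ODEs-Frobenius-columns} is precisely to copy the proof of Theorem~\ref{thm:ODEs}, invoking Proposition~\ref{prop:basic-estimate-Frobenius} (read, as you do, with the integral $\int_{t_0}^{t} \min_i \sum_j a_{ij}(\tau)\,d\tau$ in the exponent) together with the comparison of the $\ell_1$-norm and the Euclidean norm on $\RR^{N}_{++}$, plus the Birkhoff ergodic theorem and Proposition~\ref{prop:top-Lyapunov}. Your side remarks on the integrability of $\omega \mapsto \min_i \sum_j a_{ij}(\omega)$ and on where (A1) actually enters (keeping the solution components positive so the basic estimate applies) are consistent with the paper, and the slightly lossier constant $\ln N$ in place of $\tfrac{1}{2}\ln N$ is immaterial since it disappears after dividing by $t$.
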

\begin{proof}[Indication of proof]
We copy the proof of Theorem~\ref{thm:ODEs}, applying the fact that
the sum of the coordinates of a vector $x$ in $\RR^N_{++}$ equals the
$\ell_1$\nobreakdash-\hspace{0pt}norm of $x$.
\end{proof}

By passing to the dual system and noticing that it is generated by a
random linear system of ODEs with matrix transposes
(see~\cite[Ch.~5]{Arn}), we see that the following holds true:
\begin{theorem}[Frobenius-type estimate, row version]
\label{thm:ODEs-Frobenius-rows}
Under \textup{(A0)} and \textup{(A1)},
\begin{equation*}
\lambda \ge \int\limits_{\Omega} \Bigl( \min\limits_{j} \sum_{i}
a_{ij}(\omega) \Bigr) \, d\mathbb{P}(\omega).
\end{equation*}
\end{theorem}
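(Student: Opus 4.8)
The plan is to reduce Theorem~\ref{thm:ODEs-Frobenius-rows} to Theorem~\ref{thm:ODEs-Frobenius-columns} by passing to the adjoint (dual) linear random dynamical system, exactly as indicated in the sentence preceding the statement. First I would recall from \cite[Ch.~5]{Arn} that if $\{U_{\omega}(t)\}$ is the linear random DS generated by \eqref{ODE-system}, then the family of adjoint operators gives rise, after reversing time in the driving flow, to a linear random DS $\{U^{*}_{\omega}(t)\}$ over the base $(\OFP,(\theta_{-t})_{t\in\RR})$; concretely, $U^{*}_{\omega}(t)$ solves the transpose system $y' = A(\omega\cdot(-t))^{\mathsf{T}} y$ (or, more cleanly, one writes the adjoint cocycle as $U^{*}_{\omega}(t) = (U_{\omega\cdot(-t)}(t))^{\mathsf{T}}$). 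The key point is that $A(\omega)^{\mathsf{T}}$ is again an ML-matrix whenever $A(\omega)$ is (transposition preserves the sign pattern of off-diagonal entries), so assumption (A1) is inherited; and $A(\cdot)^{\mathsf{T}} \in L_1(\OFP,\RR^{N\times N})$ since $\norm{A(\omega)^{\mathsf{T}}} = \norm{A(\omega)}$, so (A0) is inherited as well. Moreover the time-reversed base flow $(\theta_{-t})$ is again an ergodic measure-preserving DS on the same $\OFP$, because measure-preservation and ergodicity are symmetric under $t \mapsto -t$.

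Next I would invoke the standard fact (again from \cite[Ch.~5]{Arn} or \cite[Thm.~3.3.10]{Arn}) that the top Lyapunov exponent of a linear random DS and that of its adjoint coincide — this is because $\norm{U^{*}_{\omega}(t)} = \norm{U_{\omega\cdot(-t)}(t)}$ and the exponential growth rate is a flow-invariant quantity, so $\lim_{t\to\infty} t^{-1}\ln\norm{U^{*}_{\omega}(t)} = \lambda$ for a.e.\ $\omega$ as well. Therefore, applying Theorem~\ref{thm:ODEs-Frobenius-columns} to the adjoint system (whose generating matrix at $\omega$ is $A(\omega)^{\mathsf{T}}$) yields
\begin{equation*}
\lambda \ge \int\limits_{\Omega} \Bigl( \min\limits_{i} \sum_{j} (A(\omega)^{\mathsf{T}})_{ij} \Bigr) \, d\mathbb{P}(\omega) = \int\limits_{\Omega} \Bigl( \min\limits_{i} \sum_{j} a_{ji}(\omega) \Bigr) \, d\mathbb{P}(\omega),
\end{equation*}
and relabelling the outer index $i$ as $j$ and the summation index $j$ as $i$ gives precisely the claimed lower bound $\int_{\Omega} (\min_{j} \sum_{i} a_{ij}(\omega))\, d\PP(\omega)$, i.e.\ the minimal column sum of $A(\omega)$.

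The main obstacle — really the only nontrivial point — is getting the bookkeeping of the adjoint cocycle right: one must be careful that the adjoint of a cocycle over $(\theta_t)$ is naturally a cocycle over the inverse flow $(\theta_{-t})$, not over $(\theta_t)$ itself, and check that all the hypotheses of Theorem~\ref{thm:ODEs-Frobenius-columns} (namely (A0), (A1), ergodicity of the base, and that the adjoint system is indeed generated by an ODE system with matrix $A(\cdot)^{\mathsf{T}}$ evaluated along the reversed flow) hold for this new system. Since Birkhoff's ergodic theorem is insensitive to the direction of time, the time integrals $\int_{\Omega}(\cdot)\,d\PP$ are unaffected by the reversal, so no sign issues arise in the final estimate. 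Once the adjoint construction is cited correctly from \cite{Arn}, everything else is a one-line relabelling, which is why the authors felt entitled to state the result without a separate proof.
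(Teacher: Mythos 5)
Your proposal is correct and follows exactly the route the paper itself indicates: passing to the dual system, observing it is generated by the transposed matrices (so that (A0), (A1) and ergodicity of the base are inherited), noting the top Lyapunov exponents agree, and then applying Theorem~\ref{thm:ODEs-Frobenius-columns}. The paper gives only this one-sentence indication with a reference to \cite[Ch.~5]{Arn}; your careful bookkeeping of the adjoint cocycle over the time-reversed base flow simply fills in the details it leaves implicit.
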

Specializing to a time\nobreakdash-\hspace{0pt}independent
ML\nobreakdash-\hspace{0pt}matrix $A$ we recover the
well\nobreakdash-\hspace{0pt}known (lower) Frobenius estimates (see,
e.g., \cite{MarMinc}):
\begin{equation*}
d(A) \ge \min\limits_{i} \sum_{j} a_{ij}, \quad d(A) \ge
\min\limits_{j} \sum_{i} a_{ij}.
\end{equation*}

\section{Extensions}
\label{section:extensions}
When some diagonal entries are large negative, the contribution from
them can worsen the estimates considerably.  That can be overcome by
considering other polynomials, for instance $V(x) = x_{i} x_{j}$ for
suitably chosen indices $i \ne j$ such that $a_{ii}(\cdot)$ and
$a_{jj}(\cdot)$ are not too large negative.

Indeed, when starting from $x(t_0) = \col{(1, \dots, 1)}$ we obtain
\begin{equation*}
\begin{aligned}
(x_{i} x_{j})' = x'_{i} x_{j} + x_{i} x'_{j} & \ge a_{ji}(t)
(x_{i})^2 + a_{ij}(t) (x_{j})^2 + (a_{ii}(t) + a_{jj}(t)) x_i x_j
\\
&
\ge \Bigl( a_{ii}(t) + a_{jj}(t) + 2 \sqrt{a_{ij}(t) a_{ji}(t)}\,
\Bigr) x_{i} x_{j},
\end{aligned}
\end{equation*}
and remembering that
\begin{equation*}
\ln{\norm{x(t)}} \ge \max(\ln{x_i(t)}, \ln{x_j(t)}) \ge \frac{1}{2}
(\ln{x_i(t)} + \ln{x_j(t)})
\end{equation*}
we finally obtain, proceeding along the lines of proof of
Theorem~\ref{thm:ODEs}, that
\begin{equation*}
\lambda \ge \int_{\Omega} \Bigl( \frac{1}{2}(a_{ii}(\omega) +
a_{jj}(\omega)) + \sqrt{a_{ij}(\omega) \, a_{ji}(\omega)}\: \Bigr) \,
d\mathbb{P}(\omega).
\end{equation*}
For a time\nobreakdash-\hspace{0pt}independent specialization see
\cite[Remark~3 on p.~142]{Kolo}.

\section{Concluding remarks}
\label{section:concluding}
\begin{remark}
Theorems~\ref{thm:ODEs}, \ref{thm:ODEs-Frobenius-columns},
\ref{thm:ODEs-Frobenius-rows} are of much importance in the case when
the so\nobreakdash-\hspace{0pt}called {\em generalized exponential
separation\/} holds: as shown in~\cite[Thms.~4.1(3)
and~2.4(3)]{MiShPart2}, under additional assumptions on matrices
$A(\cdot)$ (too complicated to be written here, specializing to
irreducibility in the autonomous case), for $\PP$-a.e.\ $\omega \in
\Omega$ the top Lyapunov exponent $\lambda$ is equal to the limit
\begin{equation*}
\lim\limits_{t \to \infty} \frac{\ln{\norm{U_{\omega}(t) x_0}}}{t}
\end{equation*}
for \textbf{each} nonzero $x_0 \in \RR^{N}_{+}$ (in that context,
$\lambda$ is called the {\em principal Lyapunov exponent\/} for
\eqref{ODE-system}).
\end{remark}

\begin{remark}
In the discrete\nobreakdash-\hspace{0pt}time case analogs of
cooperative systems of the form~\eqref{ODE-system} are random systems
of matrices having \textbf{all} entries nonnegative.  Such systems
occur, for~instance, in Leslie population models.

In~\cite{BenSch} the function $V(x) = x_1 \cdot \ldots \cdot x_{N}$
was used to obtain lower estimates of the principal characteristic
exponent in a random Leslie matrix model. Those estimates were
expressed in terms of the permanents of matrices.
\end{remark}


\begin{thebibliography}{00}

\bibitem{Arn}
L. Arnold, ``Random Dynamical Systems,'' Springer Monogr. Math.,
Springer, Berlin, 1998. MR1723992 (2000m:37087)

\bibitem{BenSch}
M. Bena\"{\i}m and S. J. Schreiber, \textit{Persistence of structured
populations in environmental models}, Theor. Popul. Biol. \textbf{76}
(2009), no. 1, 19--34, DOI 10.1016/j.tpb.2009.03.007. (not covered in
MR)

\bibitem{BerPlem}
A. Berman and R. J. Plemmons, ``Nonnegative Matrices in the
Mathematical Sciences,'' revised reprint of the 1979 original,
Classics Appl. Math., \textbf{9}, SIAM, Philadelphia, PA, 1994.
MR1298430 (95e:15013)

\bibitem{Chueshov}
I. Chueshov, ``Monotone Random Systems Theory and Applications,''
Lecture Notes in Math., \textbf{1779}, Springer, Berlin, 2002.
MR1902500 (2003d:37072)

\bibitem{Farkas}
M. Farkas, ``Periodic Motions,'' Appl. Math. Sci., \textbf{104},
Springer, New York, 1994. MR1299528 (95g:34058)

\bibitem{GaHof}
B. M. Garay and J. Hofbauer, \textit{Robust permanence for ecological
differential equations, minimax, and discretizations}, SIAM J. Math.
Anal. \textbf{34} (2003), no. 5, 1007--1039, DOI
10.1137/S0036141001392815. MR2001657 (2004i:37027)

\bibitem{JoPaSe}
R. Johnson, K. Palmer and G. R. Sell, \textit{Ergodic properties of
linear dynamical systems}, SIAM J. Math. Anal. \textbf{18} (1987),
no. 1, 1--33, DOI 10.1137/0518001. MR0871817 (88a:58112)

\bibitem{JoRo}
K. Josi\'c and R. Rosenbaum, \textit{Unstable solutions of
nonautonomous linear differential equations}, SIAM Rev. \textbf{50}
(2008), no. 3, 570--584, DOI 10.1137/060677057. MR2429450
(2009d:34128)

\bibitem{Kolo}
L. Yu. Kolotilina, \textit{Lower bounds for the Perron root of a
nonnegative matrix}, Linear Algebra Appl. \textbf{180} (1993),
133--151, DOI 10.1016/0024-3795(93)90528-V.  MR1206413 (94b:15016)

\bibitem{Ma-Sm}
T. Malik and H. L. Smith, \textit{Does dormancy increase fitness of
bacterial populations in time-varying environments?}, Bull. Math.
Biol. \textbf{70} (2008), no. 4, 1140--1162, DOI
10.1007/s11538-008-9294-5. MR2391183 (2009g:92091)

\bibitem{MarMinc}
M. Marcus and H. Minc, ``A Survey of Matrix Theory and Matrix
Inequalities,'' reprint of the 1969 edition, Dover, New York, 1992.
MR1215484

\bibitem{Mi-arXiv}
J. \JM, \textit{A simple proof of monotonicity for linear cooperative
systems of ODEs}, available at arXiv:1304.6562.

\bibitem{MiSch}
J. \JM\ and S. J. Schreiber, \textit{Kolmogorov vector fields with
robustly permanent subsystems}, J. Math. Anal. Appl. \textbf{267}
(2002), no. 1, 329--337, DOI 10.1006/jmaa.2001.7776. MR1886831
(2003b:37041)

\bibitem{MiShPart2}
J. \JM\ and W. Shen, \textit{Principal Lyapunov exponents and
principal Floquet spaces of positive random dynamical systems. II.
Finite-dimensional systems}, J. Math. Anal. Appl. \textbf{404}
(2013), no. 2, 438--458, DOI 10.1016/j.jmaa.2013.03.039. MR3045185

\bibitem{MiSh-JDDE}
J. \JM\ and W. Shen, \textit{Persistence in forward nonautonomous
competitive systems of parabolic equations}, J. Dynam. Differential
Equations \textbf{23} (2011), no. 3, 551--571, DOI
10.1007/s10884-010-9181-2.  MR2836650 (2012j:35172)

\bibitem{MiSh-Fields}
J. \JM\ and W. Shen, \textit{Spectral theory for forward
nonautonomous parabolic equations and applications}, in: Infinite
Dimensional Dynamical Systems, 57--99, Fields Inst. Commun.,
\textbf{64}, Springer, New York, 2013, DOI
10.1007/978-1-4614-4523-4\_2. MR2986931

\bibitem{MiShZhao}
J. \JM, W. Shen and X. Zhao, \textit{Uniform persistence for
nonautonomous and random parabolic Kolmogorov systems}, J.
Differential Equations \textbf{204} (2004), no. 2, 471--510, DOI
10.1016/j.jde.2004.02.014.  MR2085544 (2006f:37111)

\bibitem{Minc}
H. Minc, ``Nonnegative Matrices,'' Wiley--Intersci. Ser. Discrete
Math. Optim., Wiley, New York, 1988. MR0932967 (89i:15001)

\bibitem{Sal}
P. L. Salceanu, \textit{Robust uniform persistence in discrete and
continuous nonautonomous systems}, J. Math. Anal. Appl. \textbf{398}
(2013), no. 2, 487--500, DOI 10.1016/j.jmaa.2012.09.005. MR2990074

\bibitem{Sch}
S. J. Schreiber, \textit{Criteria for $C^r$ robust permanence}, J.
Differential Equations \textbf{162} (2000), no. 2, 400--426, DOI
10.1006/jdeq.1999.3719. MR1751711 (2001e:92012)

\bibitem{Schwenk}
A. J. Schwenk, \textit{Tight bounds on the spectral radius of
asymmetric nonnegative matrices}, Linear Algebra Appl. \textbf{75}
(1986), 257--265, DOI 10.1016/0024-3795(86)90193-X. MR0825411
(87d:15010)

\bibitem{Sen}
E. Seneta, ``Non-negative Matrices and Markov Chains,'' revised
reprint of the second (1981) edition, Springer Ser. Statist.,
Springer, New York, 2006. MR2209438


\end{thebibliography}
\end{document}